\newtheorem{theorem}{Theorem}[section]
\newtheorem{corollary}[theorem]{Corollary}
\newtheorem{lemma}[theorem]{Lemma}
\newtheorem{proposition}[theorem]{Proposition}
\newtheorem{conjecture}[theorem]{Conjecture}
\newtheorem{conjecturescheme}[theorem]{Conjecture Scheme}
\newtheorem{question}[theorem]{Question}
\theoremstyle{definition}
\newtheorem{remark}[theorem]{Remark}
\newtheorem*{remark*}{Remark}
\newtheorem{definition}[theorem]{Definition}
\newcommand{\dchi}{\vec\chi}
\renewcommand{\-}{\setminus}
\newcommand{\N}{\mathbb N}
\newcommand{\Z}{\mathbb Z}
\newcommand{\rni}{\ref{lrni}}
\newcommand{\rnii}{\ref{lrnii}}
\newcommand{\rniii}{\ref{lrniii}}
\newcommand{\rniv}{\ref{lrniv}}
\newcommand{\cti}{Conjecture~\ref{conjbag}\rni}
\newcommand{\ctii}{Conjecture~\ref{conjbag}\rnii}
\newcommand{\ctiii}{Conjecture~\ref{conjbag}\rniii}
\newcommand{\ctiv}{Conjecture~\ref{conjbag}\rniv}
\newcommand{\kpi}{Theorem~\ref{kpall}\rni}
\newcommand{\kpii}{Theorem~\ref{kpall}\rnii}
\newcommand{\kpiii}{Theorem~\ref{kpall}\rniii}
\newcommand{\eq}[1]{\begin{align*}#1\end{align*}}
\newcommand{\DT}{C_3}
\DeclareMathOperator{\dist}{dist}
\DeclareMathOperator{\kp}{kp}
\tikzset{  
	-stealth,auto,node distance =0.8 cm and 1 cm, thin, 
	state/.style ={circle, draw, inner sep=0.2pt}, 
	point/.style = {circle, draw, inner sep=0.18cm, fill, node contents={}},  
	el/.style = {inner sep=2pt, align=right, sloped}  
}
\title{A variable version of the quasi-kernel conjecture}
\author{Jiangdong Ai\thanks{School of Mathematical Sciences and LPMC, Nankai University, Tianjin 300071, P.R.
China. Email: jd@nankai.edu.cn. Partially supported
by the Fundamental Research Funds for the Central Universities, Nankai University.}, Xiangzhou Liu\thanks{Department of Mathematics, Tiangong University, Tianjin 300071, P.R.
China. Email: i19991210@163.com.}, Fei Peng\thanks{Department of Mathematics, National University of Singapore, Singapore 119076. Email: pfpf@u.nus.edu.}}
\begin{document}

	\maketitle
 \begin{abstract}
     A quasi-kernel of a digraph $D$ is an independent set $Q$ such that every vertex can reach~$Q$ in at most two steps. A 48-year conjecture made by P.L. Erd\H{o}s and Sz\'ekely, denoted the \textit{small QK conjecture}, says that every sink-free digraph contains a quasi-kernel of size at most~$n/2$. 
     Recently, Spiro posed the \textit{large QK conjecture}, that every sink-free digraph contains a quasi-kernel~$Q$ such that $|N^-[Q]|\geq n/2$, and showed that it follows from the small QK conjecture. 

    In this paper,  we establish that the large QK conjecture implies the small QK conjecture with a weaker constant. We also show that the large QK conjecture is equivalent to a sharp version of it, answering affirmatively a question of Spiro. We formulate variable versions of these conjectures, which are still open in general.
    
    Not many digraphs are known to have quasi-kernels of size $(1-\alpha)n$ or less. We show this for digraphs with bounded dichromatic number, by proving the stronger statement that every sink-free digraph contains a quasi-kernel of size at most $(1-1/k)n$, where $k$ is the digraph's \textit{kernel-perfect number}.
 \end{abstract}

\section{Introduction}
 We refer readers to \cite{bang2008digraphs} for the standard terminology and notation not introduced in this paper.
Let $D=(V(D), A(D))$ be a digraph. If $xy\in A(D)$, we say that $y$ is an out-neighbor of $x$, and $x$ is an in-neighbor of $y$. Let $v \in V(D)$. The open (or closed) out-neighborhood  (or in-neighborhood) of $v$ in $D$ is defined as follows. (The subscript $_D$ is omitted if the underlying digraph is clear.)
\eq{N_{D}^{+}(v)&=\{u\in V(D):vu\in A(D)\},\ N_{D}^{+}[v]= N_{D}^{+}(v)\cup \{v\},\\
N_{D}^{-}(v)&=\{u\in V(D):uv\in A(D)\},\ N_{D}^{-}[v]= N_{D}^{-}(v)\cup \{v\}.}
Given vertices $u,v$ of a digraph $D$, let $\dist(u,v)\in\Z^{\ge0}\cup\{\infty\}$ denote the length of a shortest directed path from $u$ to $v$. With a set of vertices $S$, denote $\dist(u,S) = \min_{v\in S} \dist(u,v)$, and analogously $\dist(S,v) = \min_{u\in S} \dist(u,v)$. Define \eq{N^+(S)&=\{v\in V(D):\dist(S,v)=1\},\ N^+[S]=\{v\in V(D):\dist(S,v)\le1\},\\N^-(S)&=\{u\in V(D):\dist(u,S)=1\},\ N^-[S]=\{u\in V(D):\dist(u,S)\le1\},} coinciding with the earlier definitions if $S$ is a singleton. As a preview of what's to come, define $$N^{--}(S)=\{u\in V(D): \dist(u,S) =2\},\ N^{--}[S]=\{u\in V(D):\dist(u,S)\le 2\}.$$
We call an independent set $K\subseteq V(D)$ a \emph{kernel} of $D$ if $N^-[K]=V(D)$, namely each vertex not in~$K$ has an out-neighbor in $K$. Not every digraph has a kernel (consider an odd dicycle), although every digraph without odd dicycles has one \cite{richardson1946weakly}. 
Chv\'{a}tal and  Lov\'{a}sz \cite{MR0414412} introduced the notion of quasi-kernels. An independent set $Q\subseteq V(D)$ is said to be a \emph{quasi-kernel} of $D$ if $N^{--}[Q]=V(D)$. 
Notably, \cite{MR0414412} proves that every digraph has a quasi-kernel. 
Thus, it is natural to ask if one can always find a quasi-kernel that is small (or large). Since all the quasi-kernels in a tournament must be singletons, asking for a large quasi-kernel is not as interesting as asking for a small quasi-kernel. P.L. Erd\H{o}s and Sz\'{e}kely made the following conjecture on the existence of small quasi-kernels. 

\begin{conjecture}[Small Quasi-kernel Conjecture \cite{erdHos2010two}, 1976]\label{small}
If $D$ is a sink-free digraph, then $D$ has a quasi-kernel $Q$ with $|Q|\leq \frac{1}{2}|V(D)|$.
\end{conjecture}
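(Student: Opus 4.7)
The Small Quasi-kernel Conjecture has been open since 1976, so any proof plan here is necessarily speculative; I will sketch the most natural routes of attack and indicate where each one gets stuck.

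The starting point is the Chv\'atal--Lov\'asz existence proof for quasi-kernels, which inducts on $|V(D)|$: pick any vertex $v$, recursively find a quasi-kernel $Q_1$ of the sub-digraph induced on $N^+[v]$, then recurse on the digraph obtained by deleting $N^+[v]\cup N^-[Q_1]$ to get $Q_2$, and output $Q_1\cup Q_2$. My first plan would be to sharpen this recursion so that at each step $|Q_1|$ is at most half the number of vertices removed, yielding the ratio $\tfrac{1}{2}$ in the aggregate. The troublesome case is when $v$ is a ``near-source'' with $N^+[v]$ tiny: then $Q_1=\{v\}$ is added while few vertices are deleted. A careful choice rule --- for instance picking $v$ to maximize $|N^+[v]|$, or to lie on a long directed path --- might handle many cases, but no such rule is known to uniformly achieve the required ratio on every step.

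A second route goes through kernels: since every kernel is a quasi-kernel, the hard instances are those rich in odd dicycles. One could pick a shortest odd dicycle $C$, delete a well-chosen vertex $c\in C$, apply induction to the (hopefully smaller and more kernel-friendly) remainder, and argue that reinstating $c$ costs at most one extra vertex in $Q$. The difficulty is that vertex deletion can both create sinks and interact badly with other odd dicycles, so the inductive quasi-kernel does not extend cleanly.

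The most promising-looking angle is a local exchange argument: produce any quasi-kernel $Q$ and, whenever $|Q|>n/2$, attempt to swap out some $q\in Q$ for a strictly smaller independent set $R\subseteq V(D)\setminus Q$ so that the result is again a quasi-kernel, strictly decreasing $|Q|$. Showing that such a reduction is always available when $|Q|>n/2$ would require a double-counting argument leveraging sink-freeness. This is precisely where known attempts break down: the combinatorial characterization of when a swap is valid is essentially as hard as the original conjecture. I expect this last step, rather than any of the preliminary reductions, to be the main obstacle, and a successful attack would almost certainly require a genuinely new structural tool beyond what is currently available for general digraphs --- which is why the authors of this paper, rather than attacking the conjecture head-on, pursue the variable and large-$|N^-[Q]|$ variants.
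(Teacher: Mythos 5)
You have correctly recognized that the statement in question is Conjecture~\ref{small}, which is stated in the paper as an open conjecture (attributed to P.L.~Erd\H{o}s and Sz\'ekely, 1976) and is \emph{not} proved there; the paper's contribution is to relate it to other variants (Propositions~\ref{equiv} and~\ref{aboutiv}) and to establish it with weaker constants on special classes (Theorem~\ref{kpall}). Since there is no proof in the paper to compare against, your honest acknowledgment that the conjecture is open and that any plan is speculative is the right answer; the three attack routes you sketch (sharpening the Chv\'atal--Lov\'asz recursion, deleting vertices from odd dicycles, and local exchange/swapping arguments) are all reasonable and roughly reflect the known landscape of partial results. One small point worth flagging: the paper's own partial progress (Theorem~\ref{kpall}) is closest in spirit to your ``kernel route'' --- it partitions $V(D)$ into kernel-perfect parts and uses Lemma~\ref{kplem} to control the quasi-kernel size, achieving $(1-1/k)n$ when $\kp(D)\le k$ --- so if you wanted to develop your second route further, the kernel-perfect-number machinery in Section~3 is the natural scaffolding. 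But as a response to ``prove Conjecture~\ref{small},'' declining to claim a proof is exactly correct.
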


Here, a digraph is said to be \textit{sink-free} if it has no sinks, where \textit{sinks} denote the vertices without in-neighbors. \textit{Sources} and \textit{source-free} digraphs are defined analogously, referring to the out-neighbors.
The sink-free condition cannot be removed, as can be seen by considering a digraph with many sinks. The constant $1/2$ is the best possible, as can be seen by considering (disjoint unions of) directed 2-cycles and 4-cycles. 

Conjecture~\ref{small} is wide open: the best bound that works for all sink-free digraphs appears to be $|Q|\le n-\sqrt n$, where $n$ is the number of vertices \cite{spiro2024generalized}. However, there have been substantial results that confirm that Conjecture~\ref{small} holds on certain classes of digraphs. 
Heard and Huang \cite{heard2008disjoint}  showed that each sink-free digraph $D$ has two disjoint quasi-kernels if $D$ is semicomplete multipartite, quasi-transitive, or locally semicomplete. As a consequence, Conjecture ~\ref{small} is true for these three classes of digraphs. Van Hulst \cite{Hulst2021KernelsAS} showed that Conjecture~\ref{small} holds for all digraphs containing kernels. Kostochka, Luo and Shan \cite{MR4477848} proved that Conjecture~\ref{small} holds for digraphs with chromatic number at most~4. Ai et al. \cite{MR4569676} proved that Conjecture~\ref{small} holds for one-way split digraphs. 
We refer the interested reader to the nice survey by P.L. Erd\H{o}s et al.
\cite{erdHos2023small} for a more thorough overview of this problem.

Recently, Spiro introduced a way to ask for a large quasi-kernel in general: that is, to measure ``largeness" not by the size of $Q$ but by that of $N^-[Q]$. Note the removal of the sink-free condition below. 

\begin{conjecture}[Large quasi-kernel conjecture, \cite{spiro2024generalized}]\label{large}
Every digraph $D$ has a quasi-kernel $Q$ such that $|N^-[Q]|\ge \frac{1}{2}|V(D)|$.

\end{conjecture}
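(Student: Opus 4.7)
I would attempt induction on $n=|V(D)|$, building on Chv\'atal and Lov\'asz's constructive proof \cite{MR0414412} that every digraph admits a quasi-kernel. The idea is to pick any quasi-kernel $Q_0$ of $D$, partition $V(D)$ into $A:=Q_0$, $B:=N^-(Q_0)\setminus Q_0$, and $W:=V(D)\setminus N^-[Q_0]$ (so $W=N^{--}(Q_0)$), and then ``re-grow'' $Q_0$ on the $W$-side by invoking the inductive hypothesis on $D[W]$. If $W=\emptyset$ then $Q_0$ already witnesses the conjecture; otherwise $|W|<n$, so induction applies to $D[W]$.

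\textbf{Combining construction.} By induction on $D[W]$, obtain a quasi-kernel $Q_W$ of $D[W]$ with $|N^-_{D[W]}[Q_W]|\ge|W|/2$. Let $X:=Q_0\cap N^-(Q_W)$ and set $Q:=Q_W\cup(Q_0\setminus X)$. Independence of $Q$ is immediate: no vertex of $W$ has an out-neighbor in $Q_0$ by the definition of $W$, and we excised from $Q_0$ every vertex with an out-neighbor in $Q_W$. A short case check shows $Q$ is a quasi-kernel of $D$: a vertex in $W$ reaches $Q_W$ within $D[W]$ in $\le2$ steps; a vertex in $X$ reaches $Q_W$ in one step; and a vertex $v\in B$ reaches some $u\in Q_0$ in one step, whence $v$ reaches $Q$ in $\le2$ steps whether $u\in X$ or $u\in Q_0\setminus X$.

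\textbf{Counting.} By construction $Q_0\setminus X\subseteq Q$ and $X\subseteq N^-(Q_W)\subseteq N^-[Q]$, so $Q_0\subseteq N^-[Q]$. Also $N^-[Q]\cap W\supseteq N^-_{D[W]}[Q_W]$, of size $\ge|W|/2$. Since $Q_0$ and $W$ are disjoint, $|N^-[Q]|\ge|Q_0|+|W|/2$. Using $n=|Q_0|+|B|+|W|$, we obtain $|N^-[Q]|\ge n/2$ as soon as $|Q_0|\ge|B|$.

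\textbf{Main obstacle.} An arbitrary quasi-kernel $Q_0$ need not satisfy $|Q_0|\ge|B|$, and this is where the argument stalls. Two natural remedies come to mind: (i) choose $Q_0$ extremally --- for instance, maximising $|Q_0|-|B|$, or maximising $|N^-[Q_0]|$, over all quasi-kernels of $D$ --- in the hope that extremality forces structural control on the bipartite digraph from $B$ to $Q_0$; or (ii) strengthen the inductive hypothesis to a weighted statement that charges vertices of $B$ less than those of $W$. Both run into the same difficulty: the bipartite structure between $B$ and $Q_0$ can be essentially arbitrary, and extremality of $Q_0$ does not obviously constrain $|B|$. Getting past this obstruction seems to require a genuinely new idea --- which is precisely why the large QK conjecture remains open.
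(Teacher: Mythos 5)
This statement is a conjecture, not a theorem: the paper neither proves Conjecture~\ref{large} nor claims to, and it remains open, so there is no ``paper proof'' to compare your attempt against. Your proposal does not close the argument and honestly says so, but the construction you do carry out is correct as far as it goes, and you have located the obstruction exactly where it lives. Setting $Q=Q_W\cup(Q_0\setminus X)$ with $X=Q_0\cap N^-(Q_W)$ does give an independent quasi-kernel of $D$, and $|N^-[Q]|\ge|Q_0|+|W|/2$ is a valid lower bound since $Q_0$ and $N^-_{D[W]}[Q_W]$ are disjoint subsets of $N^-[Q]$. With $n=|Q_0|+|B|+|W|$ this yields $n/2$ precisely when $|Q_0|\ge|B|$, which an arbitrary quasi-kernel need not satisfy.

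Two remarks to sharpen the picture. First, the inequality $|Q_0|\ge|B|$ is the shadow of a coverage loss rather than the primitive obstruction: if all of $B$ remained inside $N^-[Q]$, you would already have $|N^-[Q]|\ge|Q_0|+|B|+|W|/2=n-|W|/2\ge n/2$ with no condition at all, so the real damage comes from $B$-vertices whose only out-neighbours in $Q_0$ land in $X$ and become uncovered once $X$ is excised to restore independence. Second, your peeling step is the natural generalisation of a move the paper does make, namely the reduction of statement~{\rniii} to its sink-free version in the proof of Proposition~\ref{equiv}: there $Q_0=\{v\}$ is a single sink, so $B=N^-(v)\setminus\{v\}=\emptyset$ and the induction closes for free. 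Replacing the sink by a general quasi-kernel loses control of $B$, which is exactly why the paper's progress on Conjecture~\ref{large} consists of equivalences between reformulations (Propositions~\ref{equiv} and~\ref{aboutiv}) and verification for special classes (Theorem~\ref{kpall}, for bounded kernel-perfect number) rather than a direct general argument --- the missing idea you correctly diagnose as required has not yet been found by anyone.
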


Interestingly, \cite{spiro2024generalized} shows that Conjecture~\ref{small} implies Conjecture~\ref{large}, and obtains several results on both conjectures. In this paper, we show that the converse is also true to some degree: namely, Conjecture~\ref{large} implies Conjecture~\ref{small} but with a weaker constant ($2/3$) instead of $1/2$.  We utilize the following conjecture scheme to enable more extended discussions:

\begin{conjecturescheme}\label{conjbag}
    Fix some $0<\alpha\le 1/2$. One can conjecture the following, for all digraphs $D$ on $n$ vertices:
    \begin{enumerate}[I.]
        \item \label{lrni} (Small quasi-kernel conjecture) If $D$ is sink-free, then it has a quasi-kernel with size at most $(1-\alpha)n$.
        \item \label{lrnii} (Small quasi-kernel conjecture with sources) $D$ has a quasi-kernel with size at most $n-\alpha s$, where $s$ is the number of sources in $D$ that are not sinks.
        \item \label{lrniii} (Large quasi-kernel conjecture) $D$ has a quasi-kernel $Q$ such that $|N^-[Q]|\ge \alpha n$.
        \item \label{lrniv} (Sharp large quasi-kernel conjecture) $D$ has a quasi-kernel $Q$ such that $|Q|/2+|N^-(Q)|\ge \alpha n$.
    \end{enumerate}
\end{conjecturescheme}

Our next result will focus on the first three statements. At a glance, the only obvious relationship among them is that {\cti} implies a ``sink-free version" of {\ctii} (i.e., assuming additionally that $D$ is sink-free). It is shown in \cite{spiro2024generalized} that {\cti} implies {\ctiii}. Here we provide a clearer picture.

\begin{proposition}\label{equiv}
    {\ctii} and {\ctiii} are equivalent, and equivalent respectively to their sink-free version. {\cti} implies {\ctiii}, and {\ctiii}($\alpha$) implies {\cti}($\frac\alpha{1+\alpha}$).
\end{proposition}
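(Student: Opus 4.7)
The plan is to take Spiro's result I $\Rightarrow$ III as given and then prove each remaining claim separately: the sink-free equivalences for II and III, the equivalence II $\Leftrightarrow$ III, and III($\alpha$) $\Rightarrow$ I($\alpha/(1+\alpha)$).

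For the sink-free equivalences, one direction is trivial (sink-free is a restriction). For the converse, given any $D$, I would iteratively peel off vertices without out-neighbors to split $V(D)$ into a trapped set $T^\infty$ and the sink-free core $V^\infty$; since arcs never leave $T^\infty$ into $V^\infty$, the subgraph $D[T^\infty]$ is acyclic and admits a unique kernel $K$ containing the original sinks $T_0$. Taking $Q = Q^\infty \cup K$ with $Q^\infty$ obtained from the sink-free version of the conjecture applied to $D[V^\infty]$ yields a candidate quasi-kernel. The delicate step is handling independence when an arc $q \to k$ crosses from $Q^\infty$ into $K$: I would delete the offending $q$ from $Q^\infty$ and argue that each vertex losing its dist-1 coverage through $q$ recovers a dist-2 path $v \to q \to k$ to $K$, with the corresponding count of either $|Q|$ or $|N^-[Q]|$ preserved.

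For II $\Leftrightarrow$ III, I would try a direct reduction via an auxiliary digraph $D^+$ formed by appending a pendant source $w_v$ with arc $w_v \to v$ for every $v \in V(D)$; this gives $|V(D^+)| = 2n$ and exactly $n$ sources-not-sinks. The map $Q^+ \mapsto Q = Q^+ \cap V(D)$ always carries a quasi-kernel of $D^+$ to one of $D$, and a direct double-count of pendants in $N^-[Q^+]$ gives the identity $|N^-[Q^+]|_{D^+} = |N^-[Q]|_D + |Q| + |Q^+ \cap W|$, where $W = \{w_v\}$. The hope is that this identity, combined with the right extremal choice of $Q^+$, transfers bounds on $|Q^+|$ (from II) into bounds on $|N^-[Q]|_D$ (for III) and vice versa. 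The main challenge is preserving the constant $\alpha$ exactly across the reduction; a richer gadget or an iterative scheme may be needed, possibly in combination with the sink-free equivalence just established.

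For III($\alpha$) $\Rightarrow$ I($\alpha/(1+\alpha)$), given sink-free $D$ I would apply III to obtain $Q_0$ with $|N^-[Q_0]| \ge \alpha n$. If $|Q_0| \le n/(1+\alpha)$ the desired bound is immediate; otherwise $|V \setminus Q_0| < \alpha n/(1+\alpha)$, and I would apply III once more to a suitable restricted sub-digraph such as $D[V \setminus Q_0]$ to obtain a second, smaller quasi-kernel, then splice it with part of $Q_0$ into an independent set covering $V$. I expect the hardest part to be this splicing: since III controls only $|N^-[Q]|$ and not $|Q|$, the tight constant $\alpha/(1+\alpha)$ must emerge from balancing the two applications of III and resolving arc-conflicts between the two pieces without sacrificing coverage or independence.
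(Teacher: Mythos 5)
The proposal has the right high-level roadmap (sink-free equivalences, II $\Leftrightarrow$ III, III $\Rightarrow$ I with loss), but the key mechanisms are either missing or would not yield the claimed constants; the paper's actual proof differs from your sketch in all three places.

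First, for II $\Leftrightarrow$ III, your gadget of \emph{one} pendant source $w_v$ per vertex cannot preserve $\alpha$: the identity you write (which also miscounts; $w_v$ is in $N^-[Q^+]$ iff $v\in Q^+$ or $w_v\in Q^+$) only transfers the inequality up to an additive error of order $n$, and you correctly worry that ``a richer gadget or an iterative scheme may be needed.'' The missing idea is a \emph{free parameter}: the paper attaches $C$ pendant sources per vertex for one direction, and for the other direction uses a weighted blowup with $n_a = C|N^-(a)\cap S|+1$ copies of $a$, letting $C\to\infty$. This parameter is precisely what makes the error term vanish and the constants match exactly. Without it the reduction is lossy.

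Second, your direct argument for the sink-free equivalences has a genuine gap. You peel off a trapped set, apply the sink-free version to the core, set $Q=Q^\infty\cup K$, and then delete any $q\in Q^\infty$ having an out-arc into $K$. You argue that vertices losing \emph{dist-1} coverage through $q$ still reach $K$ via $v\to q\to k$, but vertices that reached $Q^\infty$ only by a \emph{dist-2} path whose midpoint or endpoint is $q$ are left uncovered, and nothing replaces that coverage. Moreover, in the III and IV sink-free reductions you would also need to control $|N^-[Q]|$ or $|Q|/2 + |N^-(Q)|$ after the surgery, which your sketch does not track. The paper avoids all of this by taking a \emph{minimal} counterexample $D$ to the general version: it must contain a sink $v$, so one removes $N^-[v]$, applies the (general) statement on the smaller digraph $D[A]$, and adds $v$ back to the resulting quasi-kernel; independence and coverage are automatic because $v$ is a sink and all of $N^-[v]$ is covered through $v$. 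This is much cleaner and your surgery-based alternative does not currently close.

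Third, for III($\alpha$) $\Rightarrow$ I($\alpha/(1+\alpha)$), your plan of applying III twice and ``splicing'' two quasi-kernels is underdeveloped and faces exactly the difficulty you name: III gives no control on $|Q|$, so two applications still give no size bound, and resolving independence conflicts between the two pieces is not just hard but structurally unmotivated. The paper instead routes through II: take a \emph{minimal} quasi-kernel $Q$, take a maximal directed matching from $N^-(Q)$ into $Q$, split $Q$ into the matched part $Q_1$ and unmatched part $Q_2$, observe that minimality forces $Q_2$ to consist of sources-not-sinks in $D[Q_2\cup M]$ where $M=N^{--}(Q)$, apply II there, and then balance the two candidate quasi-kernels $Q$ and $Q'\cup(Q_1\setminus N^-(Q'))$. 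The minimal-quasi-kernel + matching decomposition is the crucial ingredient your sketch is missing, and it is what turns the ``number of sources'' parameter in II into a usable handle on quasi-kernel size.
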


Since a general bound of the form $(1-\Theta(1))n$ is not known for Conjecture~\ref{small}, Proposition~\ref{equiv} suggests that the large quasi-kernel conjecture is a safe but also effective target to work on: proving {\ctiii}, for any $\alpha$, would imply a breakthrough on the small quasi-kernel conjecture. It would be interesting to know whether {\ctiii} is completely equivalent to {\cti}. Of course, a negative answer would disprove Conjecture~\ref{small}.
\begin{question}\label{q31equiv}
    Is {\ctiii} equivalent to {\cti}? That is, for all $0<\alpha\le 1/2$, if all digraphs on $n$ vertices have a quasi-kernel $Q$ with $|N^-[Q]|\ge\alpha n$, then all sink-free digraphs on $n$ vertices have a quasi-kernel with size at most $(1-\alpha)n$.
\end{question}
Note that we are not trying to say that these conjectures are equivalent on the same digraph~$D$. In fact, one can see that statements {\rni} and {\rniii} cannot both be false on the same digraph.

As noted in \cite{spiro2024generalized}, {\ctiii}($\frac12$) is only asymptotically sharp. For this reason, we would like to advertise the ``sharp large quasi-kernel conjecture": every disjoint union of Eulerian tournaments of arbitrary sizes witnesses the sharpness (if true) of {\ctiv}($\frac12$). In the next result, we will show that this sharp version is indeed equivalent to {\ctiii}. Setting $\alpha=1/2$, this implies that \cite[Question~7.9]{spiro2024generalized} is equivalent to Conjecture~\ref{large}, answering affirmatively a question at the end of \cite[Section 7]{spiro2024generalized}. Moreover, we observe that if the quasi-kernel requirement in {\ctiv}($\frac12$) is relaxed to allow any independent set, it will be not only sharp but also true. This extends Lemma~4.1b of \cite{spiro2024generalized} to a tight result.

\begin{proposition}\label{aboutiv}~
    \begin{enumerate}[(a)]
        \item {\ctiii} and {\ctiv} are equivalent, and equivalent respectively to their sink-free version.
        \item Every digraph $D$ contains an independent set $I$ with $|I|/2+|N^-(I)|\ge n/2$.
    \end{enumerate}
\end{proposition}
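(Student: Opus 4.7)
My plan addresses the two parts of the proposition separately, with part~(a) being the main obstacle.

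\textbf{Part~(a).} The direction {\ctiv}$(\alpha)\Rightarrow${\ctiii}$(\alpha)$ is immediate, since any quasi-kernel $Q$ with $|Q|/2+|N^-(Q)|\ge\alpha n$ satisfies $|N^-[Q]|=|Q|+|N^-(Q)|\ge|Q|/2+|N^-(Q)|\ge\alpha n$. For the converse {\ctiii}$(\alpha)\Rightarrow${\ctiv}$(\alpha)$, I would build an auxiliary digraph $D^\star$ on $2n$ vertices by equipping each $v\in V(D)$ with a twin $v'$ joined to $v$ by a small arc-gadget (for instance a 2-cycle $v\leftrightarrow v'$, or an arc $v'\to v$ combined with suitable copies of $v$'s adjacencies at $v'$), chosen so that quasi-kernels $Q^\star$ of $D^\star$ project to quasi-kernels $Q=Q^\star\cap V(D)$ of $D$ and $|N^-_{D^\star}[Q^\star]|$ decomposes into a linear combination in which $|N^-_D(Q)|$ enters with weight~$2$ while $|Q|$ enters with weight~$1$. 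Applying {\ctiii}$(\alpha)$ to $D^\star$ then yields a quasi-kernel of $D$ meeting the {\ctiv}$(\alpha)$ bound. The equivalences with the sink-free versions I plan to handle via a constant-size gadget such as a 2-cycle $z_1\leftrightarrow z_2$ with arcs $z_i\to v$ for all $v\in V(D)$, which removes sinks without perturbing the relevant counts. Tuning these gadgets so that the constant $\alpha$ is preserved exactly will likely require several attempts and is the most delicate step.

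\textbf{Part~(b).} I would argue by induction on $n$, with $n=0$ trivial. Writing $N(v):=N^+(v)\cup N^-(v)$, the key observation is that $\sum_v|N^-(v)|=\sum_v|N^+(v)|=|A(D)|$ and $|N(v)|\le|N^+(v)|+|N^-(v)|$, so $\sum_v\bigl(2|N^-(v)|-|N(v)|\bigr)\ge 0$ and hence some $v^\star\in V$ satisfies $|N^-(v^\star)|\ge|N(v^\star)|/2$. Let $D':=D[V\setminus(\{v^\star\}\cup N(v^\star))]$; by the inductive hypothesis there is an independent set $I'\subseteq V(D')$ with
\[
\tfrac{|I'|}{2}+|N^-_{D'}(I')|\ge\tfrac{|V(D')|}{2}=\tfrac{n-1-|N(v^\star)|}{2}.
\]

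Take $I:=I'\cup\{v^\star\}$. Since all $D$-neighbors of $v^\star$ lie in $N(v^\star)\subseteq V\setminus V(D')$, which is disjoint from $I'\subseteq V(D')$, the set $I$ is independent in $D$. Moreover $N^-(v^\star)\subseteq V\setminus V(D')$ is disjoint from $V(D')\supseteq N^-_{D'}(I')$, and every element of $N^-(v^\star)$ points into $v^\star\in I$, so $|N^-_D(I)|\ge|N^-(v^\star)|+|N^-_{D'}(I')|$. Combining,
\[
\tfrac{|I|}{2}+|N^-_D(I)|\ge\tfrac12+|N^-(v^\star)|+\Bigl(\tfrac{|I'|}{2}+|N^-_{D'}(I')|\Bigr)\ge\tfrac{n}{2}+|N^-(v^\star)|-\tfrac{|N(v^\star)|}{2}\ge\tfrac{n}{2},
\]
using the inductive bound and then the choice of $v^\star$, which closes the induction.
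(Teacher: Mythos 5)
Your sketch has a genuine gap in the direction {\ctiii}$\Rightarrow${\ctiv}, and the gadgets you propose do not deliver the weighting you need. With a $2n$-vertex twin construction, whichever arcs you place between $v$ and $v'$, the pair $\{v,v'\}$ contributes $2$ to $|N^-_{D^\star}[Q^\star]|$ whenever $v\in Q$ (one twin is in $Q^\star$, the other lies in its in-neighborhood via the connecting arc), whereas you want $v\in Q$ to contribute only $1$ and $v\in N^-_D(Q)$ to contribute $2$. With a bare arc $v'\to v$ the count comes out as $2|Q|+|N^-_D(Q)|$, giving a statement \emph{weaker} than {\ctiii}; with a $2$-cycle it comes out as $2|Q|+2|N^-_D(Q)|$, which is exactly $|N^-_D[Q]|$ scaled by $2$ and again yields nothing new. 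More fundamentally, there is no reason a single application of a fixed gadget should land precisely on the coefficient $1/2$: the paper instead defines the set $S$ of admissible coefficients $\beta$, shows $S$ is a closed interval containing $1$, and proves via the $C_3$-blowup that $\beta\in S\Rightarrow(\beta+1)/3\in S$. This map contracts to the fixed point $1/2$, and taking $b=\min S$ closes the argument. The choice of $C_3$ is essential for another reason you should watch: in a $C_3$-copy of $v$, any independent $Q'$ meets at most one vertex, so $|Q'|=|Q|$, whereas in $K_2$-like or $C_4$-like gadgets two non-adjacent copies of the same vertex can both enter $Q'$ and spoil the count. Your sink-free gadget ($z_1\leftrightarrow z_2$ dominating everything) is also different from the paper, which runs a minimal-counterexample argument removing a sink and its in-neighborhood; your gadget would need a separate verification but is at least plausible. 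As written, though, part (a) is not a proof — you acknowledge as much — and the fixed-point/iteration mechanism is the missing idea.

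\textbf{Part (b).} Your argument here is correct, and it is essentially the same strategy as the paper (pick a vertex $v^\star$ with $|N^-(v^\star)|\ge|N^+(v^\star)|$, equivalently $|N^-(v^\star)|\ge|N(v^\star)|/2$, delete its closed neighborhood, induct, and add $v^\star$ back). The difference is that the paper first proves the stronger auxiliary claim that some maximal independent set $I$ satisfies $|N^-(I)|\ge|N^+(I)|$ and then derives the result, whereas you track the target quantity $|I|/2+|N^-_D(I)|$ directly through the induction. Your version is a little cleaner: you only need the one-sided containment $N^-_D(I)\supseteq N^-(v^\star)\sqcup N^-_{D'}(I')$, so you never have to account for arcs from the inductive piece $D'$ back into the deleted neighborhood of $v^\star$, a point the paper's displayed equalities gloss over. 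This is a valid and arguably tighter presentation of the same idea.
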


As noted before, there are not many classes of digraphs on which {\cti} is known to hold for some $\alpha$. We recall the notion of \textit{kernel-perfect} digraphs, and introduce a digraph measure called the \textit{kernel-perfect number}.
\begin{definition}
    A digraph $D$ is said to be \textit{kernel-perfect} if every induced subdigraph of it has a kernel. In this paper, we conveniently call a vertex set $S\subseteq V(D)$ kernel-perfect if $D[S]$ is kernel-perfect. The \textit{kernel-perfect number} of a digraph $D$, denoted $\kp(D)$, is the smallest $k$ such that $V(D)$ can be partitioned into $k$ kernel-perfect subsets.
\end{definition}
We recall the related notions of \textit{chromatic number} and \textit{dichromatic number} of digraphs.
\begin{definition}
Let $D$ be a digraph. 
\begin{itemize}
    \item The chromatic number $\chi(D)$ is the smallest $k$ such that $V(D)$ can be partitioned into $k$ subsets, each of which induces an independent set.
    \item The dichromatic number $\dchi(D)$ is the smallest $k$ such that $V(D)$ can be partitioned into $k$ subsets, each of which induces an acyclic set.
\end{itemize}
\end{definition}
Note that $\kp(D)\le\dchi(D)\le\chi(D)$, since independent sets are acyclic and acyclic sets are kernel-perfect. Also, $\kp(D)\le\lceil\chi(D)/2\rceil$ because one can group the color classes of the underlying graph two by two, so that each group is odd-dicycle-free, hence kernel-perfect~\cite{richardson1946weakly}. A main result of~\cite{MR4477848} suggests that Conjecture~\ref{small} holds on digraphs with kernel-perfect number at most 2, which include all digraphs with dichromatic number at most 2 or chromatic number at most 4 (hence all planar digraphs).
We extend this to a variable version that applies to digraphs with any bounded kernel-perfect number, and in addition, prove the large quasi-kernel analog.

\begin{theorem}\label{kpall}
    Let $D$ be a digraph and $k=\max(\kp(D),2)$. Then {\cti}($\frac1k$), {\ctii}($\frac1k$) and {\ctiii}($\frac1k$) hold on $D$.
\end{theorem}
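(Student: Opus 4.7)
The plan is to work with the kernel-perfect partition $V(D)=V_1\cup\cdots\cup V_k$ granted by $\kp(D)\le k$, and to build quasi-kernels by iteratively extracting kernels of kernel-perfect induced subdigraphs. Since {\ctii}($\tfrac1k$) and {\ctiii}($\tfrac1k$) are equivalent by Proposition~\ref{equiv}, only {\cti}($\tfrac1k$) and {\ctiii}($\tfrac1k$) need direct proof; I would approach them in parallel.

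For {\ctiii}($\tfrac1k$), pigeonhole yields some $V_i$ with $|V_i|\ge n/k$, so it suffices to produce a quasi-kernel $Q$ with $V_i\subseteq N^-[Q]$. I would attempt a \emph{main lemma}: whenever $D[S]$ is kernel-perfect, $D$ admits a quasi-kernel $Q$ with $S\subseteq N^-[Q]$; the case $S=V_i$ finishes {\ctiii}. The construction starts with $K$ a kernel of $D[S]$ (so $S\subseteq N^-[K]$), passes to the ``far'' set $T=V\setminus(S\cup N^+(K)\cup N^-(K))$, obtains a quasi-kernel $Q'$ of $D[T]$ by induction (noting $\kp(D[T])\le k-1$ when $S$ is one of the parts), and outputs $Q=K\cup Q'$, which is independent by the choice of $T$. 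For {\cti}($\tfrac1k$) with $D$ sink-free, the same construction applies; the additional work is bounding $|Q|=|K|+|Q'|$ by induction on $k$, the base $k=2$ being handled by the Kostochka--Luo--Shan-type argument (or van Hulst's theorem when the large part is itself kernel-perfect).

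The main obstacle is dealing with the ``leftover'' set $W=N^+(K)\setminus N^-[K]\setminus S$: the vertices receiving an arc from $K$ but offering no arc back, and not in $S$. For $w\in W$, an out-neighbor in $S\setminus K$ or in $N^-(K)\setminus S$ reaches $K$ in one more step, putting $w$ at distance~$2$ from $Q$; an out-neighbor in $T$, however, only helps if it already lies in $N^-[Q']$, which is strictly stronger than $Q'$ being a quasi-kernel of $D[T]$. Closing this gap---either by strengthening the inductive hypothesis to supply a $Q'$ with $T\subseteq N^-[Q']$ on appropriate subsets, or by reselecting $K$ (e.g.\ by enlarging $S$ to swallow $W$) so that $W$ is empty or negligible---is the technical heart of the argument. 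Sink-freeness in {\cti} rules out sinks in $W$ and gives crucial extra flexibility, while the delicate size accounting needed for {\cti} (versus the cleaner ``covering'' statement of {\ctiii}) is the secondary source of difficulty.
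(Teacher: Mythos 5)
Your high-level plan for {\ctiii}---pigeonhole to find a large part $V_i$, then build a quasi-kernel $Q$ with $V_i\subseteq N^-[Q]$---is exactly the paper's strategy, and the auxiliary ``main lemma'' you want is the paper's Lemma~\ref{kplem}. But your proposed proof of that lemma, by taking a kernel $K$ of $D[S]$, recursing on $T=V\setminus(S\cup N^+(K)\cup N^-(K))$, and setting $Q=K\cup Q'$, is incomplete, and you already see why: a vertex in $N^+(K)\setminus N^-[K]\setminus S$ with all out-neighbors in $T$ is not guaranteed to be within distance~$2$ of $Q$. The paper sidesteps this entirely with a one-step maximality trick you don't consider: enlarge $P$ to a maximal kernel-perfect set inside $D[V(D)\setminus N^-(P)]$. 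Since adding a sink to a kernel-perfect set keeps it kernel-perfect, maximality forces $N^-[P]=V(D)$, and then any kernel of $D[P]$ is already a quasi-kernel of the whole digraph $D$ covering $P$ and avoiding $N^-(P)$. No recursion and no leftover set $W$ arise.

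For {\cti} your sketch is considerably thinner, and an induction on $k$ with base case $k=2$ does not obviously produce the tight $(1-1/k)n$ bound; the accounting you gesture at is not carried out. The paper's proof of {\kpi} is a self-contained argument of a different shape: it takes a kernel $K$ of a maximally kernel-perfect part, extracts a minimal $K_0\subseteq K$ with $N^-(K_0)=N^-(K)$ (so $|K_0|\le |N^-(K)|$), splits $V(D)$ into $V_1'=N^-(K)\cup K_0$ and $W=V(D)\setminus V_1'$ (itself partitioned into $k$ kernel-perfect pieces $V_0',V_2',\dots,V_k'$), and then does a case analysis: either some $V_i'$ ($i\ge 2$) has $|N^-(V_i')\cap V_0'|\ge |W|/k$, in which case Lemma~\ref{kplem} applied inside $D[W]$ to $V_i'$ gives a quasi-kernel avoiding a $|W|/k$-fraction of $W$, or else $(N^-(W)\cap V_0')\cup K_0$ is directly a small quasi-kernel. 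Here the \emph{second} conclusion of Lemma~\ref{kplem}---that $Q$ can be chosen disjoint from $N^-(P)$---is the one doing the work, a feature your lemma statement does not even include. Finally, your remark that {\ctii} reduces to {\ctiii} via Proposition~\ref{equiv} needs the extra observation (which the paper makes) that the weighted blowup used in that reduction does not increase the kernel-perfect number; without that, the equivalence in the conjecture scheme does not automatically transfer to the class of digraphs with bounded $\kp$.
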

\begin{corollary}
    Let $D$ be a digraph with $\dchi(D)\le k$ or $\chi(D)\le 2k$, where $k\ge 2$. Then {\cti}($\frac1k$), {\ctii}($\frac1k$) and {\ctiii}($\frac1k$) hold on $D$.
\end{corollary}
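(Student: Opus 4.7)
The plan is to derive this corollary directly from Theorem~\ref{kpall} by bounding the kernel-perfect number in each hypothesis, and then invoking monotonicity of the three conclusions in $\alpha$.

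First I would show $\kp(D)\le k$ under either assumption. If $\dchi(D)\le k$, take a dichromatic partition into at most $k$ acyclic parts. Every finite acyclic digraph has a kernel (iteratively pick its set of sinks), and every induced subdigraph of an acyclic digraph is acyclic; so each part is kernel-perfect, and $\kp(D)\le k$. If instead $\chi(D)\le 2k$, take a proper coloring with at most $2k$ classes and merge them into at most $\lceil 2k/2\rceil = k$ pairs; each resulting part is the union of at most two independent sets, hence induces a bipartite digraph, which has no odd dicycle. By Richardson's theorem~\cite{richardson1946weakly}, such a digraph---and every induced subdigraph, which remains bipartite---admits a kernel, so each part is kernel-perfect and again $\kp(D)\le k$. (Both of these inequalities were already noted in the paragraph preceding Theorem~\ref{kpall}; I would simply record them as a short lemma.)

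Next I would set $k'=\max(\kp(D),2)$. Since $k\ge 2$ and $\kp(D)\le k$, we have $k'\le k$. Applying Theorem~\ref{kpall} to $D$ yields {\cti}($\frac1{k'}$), {\ctii}($\frac1{k'}$), and {\ctiii}($\frac1{k'}$). Each of the three statements in Conjecture Scheme~\ref{conjbag} becomes weaker when $\alpha$ decreases (the bounds $(1-\alpha)n$ and $n-\alpha s$ grow, while the threshold $\alpha n$ drops), so from $\frac1{k'}\ge\frac1k$ the desired $1/k$ versions follow immediately. Since the content is entirely in Theorem~\ref{kpall}, there is no real obstacle to overcome; the only thing to verify carefully is the monotonicity in $\alpha$, which is immediate from the definitions, together with the observation that $1/k\le 1/2$ lies in the admissible range fixed in Conjecture Scheme~\ref{conjbag}.
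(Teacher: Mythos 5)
Your proposal is correct and follows exactly the route the paper intends: the paper leaves the corollary without an explicit proof because the two bounds $\kp(D)\le\dchi(D)$ and $\kp(D)\le\lceil\chi(D)/2\rceil$ are already recorded in the paragraph preceding Theorem~\ref{kpall}, and the rest is the same application of Theorem~\ref{kpall} together with monotonicity in $\alpha$ that you describe.
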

\begin{remark}~
\begin{itemize}
    \item When $\kp(D)\le 2$, {\kpi} is \cite[Theorem~2]{MR4477848}, except that {\kpi} makes no claims when $D$ has sinks. With more care, however, an analogous claim can be proven similarly.

    \item When $\kp(D)=1$ (i.e., $D$ is kernel-perfect and not the null digraph), and one ignores the $\alpha\le1/2$ condition in the conjecture scheme, {\cti}(1) and {\ctiv}(1) do not hold on~$D$, while {\ctii}(1) and {\ctiii}(1) still hold on~$D$.

    \item Theorem~\ref{kpall} is perhaps another indication that Question~\ref{q31equiv} probably has a positive answer.
\end{itemize}
    
\end{remark}

We end this section with two more questions:
\begin{question}\label{inciv}
    Can Theorem~\ref{kpall} cover {\ctiv} as well? That is, all digraphs $D$ on $n$ vertices have a quasi-kernel $Q$ such that $|Q|/2+|N^-(Q)|\ge n/\max(\kp(D),2)$.
\end{question}
A negative answer to Question~\ref{inciv} would disprove Conjectures~\ref{large} and~\ref{small} by Propositions~\ref{aboutiv} and~\ref{equiv}. We also ask for general upper bounds of the kernel-perfect number.
\begin{question}
    Which graphs have the largest $\kp(D)$? Can this number be linear w.r.t $n$?
\end{question}
We observe that the iterative blowup of $C_3$ has $\kp(C_3^{\odot k})=\lceil 1.5\kp(C_3^{\odot k-1})\rceil =\Theta\left(n^{\log_3(1.5)}\right)$.
\section{Equivalent formulations}

Now we prove Proposition~\ref{equiv}. In the proof, the Roman numerals refer to the corresponding conjecture in Conjecture Scheme~\ref{conjbag}, and the ``sf" suffix denotes the corresponding sink-free version.
\begin{proof}[Proof of Proposition~\ref{equiv}]
    The directions {\rni} $\to$ {\rnii}sf $\to$ {\rniii}sf $\to$ {\rniii} are implicitly shown and used in \cite[Proposition~2.7]{spiro2024generalized}; we prove them for completeness. In addition, we show that {\rniii} $\to$ {\rnii} $\to$ {\rni}($\frac\alpha{1+\alpha}$).
    \paragraph{{\rni} $\to$ {\rnii}sf} This is clear: assuming $D$ is sink-free, one can simply apply {\cti} on $D$.
    \paragraph{{\rnii}sf $\to$ {\rniii}sf} Let $D$ be a sink-free digraph on $n$ vertices. Fix $C\in\N$, to be chosen later. Construct a digraph $D'$ by keeping $D$ and add, for each $v\in D$, $C$ new vertices pointing an arc towards $v$. Note that all these $Cn$ new vertices are sources but not sinks in $D'$, so $D'$ is still sink-free. Assuming {\ctii}sf, we obtain a quasi-kernel $Q'$ of $D'$ with size at most $(C+1)n-\alpha Cn$. Note that $Q=Q'\cap V(D)$ is a quasi-kernel of $D$. Moreover, for each $v\notin N^-_D[Q]$, all the new vertices pointing to $v$ must be included in $Q'$. Thus, $$C(n-|N^-_D[Q]|)\le|Q'|\le(C+1)n-\alpha Cn.$$ It follows that $|N^-_D[Q]|\ge \alpha n-n/C$. We are done because $C$ can be made arbitrarily large.
    \paragraph{{\rniii}sf $\to$ {\rniii}} 
    Suppose $D$ is a minimal counterexample for {\ctiii}. Assuming {\ctiii}sf, $D$ must have a sink $v$. Let $A=V(D)\-N^-[v]$. Applying {\ctiii} on the smaller digraph $D[A]$, we obtain a quasi-kernel $Q_A$ of $D[A]$ with $|N^-_{D[A]}[Q_A]|\ge \alpha |A|$. Note that $Q=Q_A\cup\{v\}$ is a quasi-kernel of $D$ with \eq{|N^-_D[Q]|&=|N^-_{D[A]}[Q_A]\sqcup N^-_D[v]|\ge\alpha|A|+(n-|A|)\ge\alpha n,} so it works. This is a contradiction.
    \paragraph{{\rniii} $\to$ {\rnii}} Let $D$ be a digraph on $n$ vertices. Denote $S$ the set of sources in $D$ that are not sinks, and $A=V(D)\-S$. Without loss of generality, we can assume each vertex in $S$ has exactly one out-neighbor (which must be in $A$), since removing an arc from $S$ to $A$ can only make it harder to find a small quasi-kernel. Let $s=|S|$ and $t=|A|$. Fix $C\in\N$, to be chosen later. For each $a\in A$, let $$n_a=C|N^-(a)\cap S|+1.$$ Construct a digraph $B$ based on $D[A]$ by replacing each $a\in A$ with $n_a$ copies of $a$ (vertices in~$S$ are discarded). In some sense, $B$ is a weighted blowup of $D[A]$. Note that every maximal quasi-kernel of $B$ naturally induces a maximal quasi-kernel of $D[A]$, and that $|V(B)|=Cs+t$. Let~$Q_B$ be a quasi-kernel of $B$ that maximizes $|N^-[Q_B]|$, and $Q_A$ be the induced maximal quasi-kernel of $D[A]$. Denote $A'$ the set of vertices in $A$ whose copies in $B$ are not in $N^-[Q_B]$ (these copies are either all in or all not in because $Q_B$ is maximal). Note that $$Q_A\cup\bigsqcup_{a\in A'}\left(N^-(a)\cap S\right)$$ is a quasi-kernel of $D$, and that its size is \begin{align*}&|Q_A|+\sum_{a\in A'}\frac{n_a-1}C\\=\ &|Q_A|+\frac1C(|V(B)\-N^-[Q_B]|-|A'|)\\\le\ &|A|+\frac{1-\alpha}{C}|V(B)|\tag{assuming {\ctiii}}\\=\ &\left(1+\frac{1-\alpha}{C}\right)t+(1-\alpha)s.\end{align*} Since $C$ can be made arbitrarily large, there is a quasi-kernel of $D$ with size at most $n-\alpha s$.
    \paragraph{{\rnii} $\to$ {\rni}($\frac\alpha{1+\alpha}$)} Let $D$ be a sink-free digraph. We claim that it has a quasi-kernel with size at most $n/(1+\alpha)$, where $\alpha$ is such that {\ctii}($\alpha$) holds. Let $Q$ be a minimal quasi-kernel of $D$, $N=N^-(Q)$ and $M=N^{--}(Q)=V(D)\-(Q\cup N)$. Take a maximal directed matching from $N$ to $Q$. Denote $Q_1$ the set of vertices in $Q$ it covers, and $Q_2=Q\-Q_1$. By the maximality of the matching, every vertex in $N$ has an out-neighbor in $Q_1$. Thus, by the minimality of $Q$, every vertex in $Q_2$ has no out-neighbor in $N$, so its out-neighbors are all in $M$. Note that $Q_2$ is a set of sources that are not sinks in $D[Q_2\cup M]$. Let $r=|Q_1|$, $s=|Q_2|$, $p=|N|\ge r$, and $m=|M|$. Assuming {\ctii}, there is a quasi-kernel $Q'$ of $D[Q_2\cup M]$ with size at most $m+(1-\alpha)s$. Note that $Q'\cup (Q_1\-N^-(Q'))$ is a quasi-kernel of $D$, with size at most $r+m+(1-\alpha)s$. This quantity and $|Q|=r+s$ cannot be both greater than $n/(1+\alpha)$: otherwise, \eq{n&<(1+\alpha)(r+m+(1-\alpha)s)\\&\le (1+\alpha)(p+m+(1-\alpha)s)\\&= (1-\alpha^2)(p+m)+(\alpha+\alpha^2)(n-(r+s))+(1+\alpha)(1-\alpha)s\\&< (1-\alpha^2)(p+m)+(\alpha+\alpha^2)(n-n/(1+\alpha))+(1+\alpha)(1-\alpha)s\\&\le n,} a contradiction. Hence $D$ has a quasi-kernel with size at most $n/(1+\alpha)$.
\end{proof}

Reusing a few of the earlier techniques, we next prove Proposition~\ref{aboutiv}.
\begin{proof}[Proof of Proposition~\ref{aboutiv}]
    As {\rniv} $\to$ {\rniii} is trivial, we show {\rniv}sf $\to$ {\rniv}, {\rniii} $\to$ {\rniv}, and then part (b).
    \paragraph{{\rniv}sf $\to$ {\rniv}} We use the proof idea of {\rniii}sf $\to$ {\rniii}. Suppose $D$ is a minimal counterexample for {\ctiv}. Assuming {\ctiv}sf, $D$ must have a sink $v$. Let $A=V(D)\-N^-[v]$. Applying {\ctiv} on the smaller digraph $D[A]$, we obtain a quasi-kernel $Q_A$ of $D[A]$ with $|Q_A|/2+|N^-_{D[A]}(Q_A)|\ge \alpha |A|$. Note that $Q=Q_A\cup\{v\}$ is a quasi-kernel of $D$ with  \eq{|Q|/2+|N^-_D(Q)|&=|Q_A\cup\{v\}|/2+|N^-_{D[A]}(Q_A)\sqcup N^-_D(v)|\\&\ge(|Q_A|+1)/2+(\alpha|A|-|Q_A|/2)+(n-|A|-1)\\&=(\alpha-1)|A|-1/2+n\\&\ge(\alpha-1)(n-1)-1/2+n\\&\ge \alpha n,} so it works. This is a contradiction.
    \paragraph{{\rniii} $\to$ {\rniv}} 
    
    Let $\alpha$ be such that {\ctiii} holds. Consider \eq{S=\{\beta\in[0,1]\ |\ \forall \text{ digraph } D,\ \exists\text{ quasi-kernel }Q:\beta|Q|+|N^-(Q)|\ge\alpha|V(D)|\},} and observe that $S$ is a closed interval containing 1. So let $b=\min(S)$. We are done if $1/2\in S$, so assume $b>1/2$. We claim that $(b+1)/3\in S$, which would contradict with $b=\min(S)$. 
    
    Let $\DT$ denote the directed triangle. For any digraph $D$, denote $D'$ the \textit{$\DT$-blowup} of $D$, which is the digraph obtained by replacing each vertex in $D$ by a copy of $\DT$, so that the arcs between different copies are as induced by $D$. For $v\in D$, denote $f(v)$ the set of vertices that take the place of $v$ in~$D'$ (so $D'[f(v)]\simeq \DT$). Since $b\in S$, there is a quasi-kernel $Q'$ of $D'$ such that $b|Q'|+|N^-(Q')|\ge\alpha|V(D')|$. Its projection onto~$D$, \eq{Q=\{v\in D:f(v)\cap Q'\neq\emptyset\},} is a quasi-kernel of $D$. For all $v\in Q$, let $g(v)=f(v)\cap Q'$, containing exactly one vertex of $D'[f(v)]$. Note that \eq{Q'&=\bigcup_{v\in Q} g(v),\\N^-(Q')&=\bigcup_{v\in Q}N^-_{D'[f(v)]}(g(v))\ \ \cup\ \bigcup_{v\in N^-(Q)}f(v),} where all the unions are disjoint. Thus, \eq{\alpha |V(D')|&\le b|Q'|+|N^-(Q')|\\&=b|Q|+|Q|+3|N^-(Q)|\\&=\frac{V(D')}{V(D)}\left(\frac{b+1}3|Q|+|N^-(Q)|\right).} Since $D$ is arbitrary, this shows $(b+1)/3\in S$, a contradiction.
    \paragraph{Part (b)}
    We show that every digraph $D$ contains an independent set $I$ with $|I|/2+|N^-(I)|\ge n/2$. The proof in \cite[Lemma~4.1b]{spiro2024generalized} can be adapted for this. Here we simplify it slightly. We claim the slightly stronger statement that $D$ contains a maximal independent set $I$ with $|N^-(I)|\ge|N^+(I)|$. This would imply what we need because for every maximal independent set $I$, $N^-(I)\cup N^+(I)=V(D)\setminus I$.

    Suppose $D$ is a minimal counterexample to the claim, which must be nonempty. By the handshaking dilemma, there is $v\in D$ whose in-degree is no less than out-degree. By the minimality of $D$, the smaller, possibly empty digraph $D'=D[V(D)\setminus (N^+[v]\cup N^-(v))]$ contains a maximal independent set $I'$ with $|N^-_{D'}(I')|\ge |N^+_{D'}(I')|$. Note that $I=I'\cup\{v\}$ is a maximal independent set in $D$, satisfying \eq{|N^-(I)|&=|N^-(v)|+|N^-_{D'}(I')|\ge |N^+(v)|+|N^+_{D'}(I')|=|N^+(I)|,} so it works. This is a contradiction.
\end{proof}

\section{Digraphs with bounded kernel-perfect number}
In this section, we prove {\kpi}. We start with a convenient lemma.
As independent sets and acyclic sets are kernel-perfect, Lemma~\ref{kplem} directly implies \cite[Lemma~3.1]{spiro2024generalized}, \cite[Lemma~4.3]{spiro2024generalized} and \cite[Lemma~A.1]{spiro2024generalized}.
\begin{lemma}\label{kplem}
    Let $D$ be a digraph with a kernel-perfect set $P\subseteq V(D)$. Then $D$ has a quasi-kernel~$Q$ such that $P\subseteq N^-[Q]$ and $Q\cap N^-(P)=\emptyset$.
\end{lemma}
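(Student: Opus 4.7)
The plan is to proceed by strong induction on $|V(D)|$. The base case $V(D) = P$ is immediate, since a kernel of $D[P]$ exists by kernel-perfectness and serves as $Q$: it is independent, every vertex is within one step, and $N^-(P) = \emptyset$ in this case so the final condition is trivial.

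For the inductive step, I first reduce to the case $N^-(P) = \emptyset$. If some $v \in N^-(P)$ exists, applying the induction hypothesis to $D - v$ (with the same $P$, still kernel-perfect since $v \notin P$) yields a quasi-kernel $Q'$ of $D - v$ with $P \subseteq N^-_{D-v}[Q']$ and $Q' \cap N^-_{D-v}(P) = \emptyset$. A short check shows $Q'$ serves directly as the desired quasi-kernel of $D$: vertex $v$ is within two steps of $Q'$ via its out-neighbor in $P \subseteq N^-[Q']$; independence is preserved because arcs among $V(D-v)$ are unchanged; and both conditions involving $P$ transfer because $v \notin Q'$, noting in particular that $N^-_D(P) = N^-_{D-v}(P) \cup \{v\}$ remains disjoint from $Q'$.

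The hard case is therefore $N^-(P) = \emptyset$, where $W := V(D) \setminus P$ has no arcs back into $P$. Let $K$ be a kernel of $D[P]$, so that $P \subseteq N^-[K]$ and $K \cap N^-(P) = \emptyset$. Applying the induction hypothesis to $D[W]$ with the trivially kernel-perfect subset $\emptyset$ yields a quasi-kernel $T$ of $D[W]$. The candidate $Q = K \cup T$ satisfies $P \subseteq N^-[Q]$ and $Q \cap N^-(P) = \emptyset$ automatically, and contains no $T \to K$ arcs (such an arc would place $T$ inside the empty set $N^-(P)$). The only remaining obstruction is possible $K \to T$ arcs, which arise precisely when $T \cap N^+(K) \neq \emptyset$.

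The main obstacle is thus to enforce $T \cap N^+(K) = \emptyset$ while keeping $T$ a quasi-kernel of $D[W]$ that still covers vertices in $W \cap N^+(K)$ within two steps. A natural first attempt is to apply the induction hypothesis to $D[W]$ with the kernel-perfect subset $W \cap N^+(K)$, but this subset need not be kernel-perfect (for instance if it contains an odd dicycle). The right approach, I expect, is to coordinate the choice of $K$ and $T$: leveraging the flexibility that $D[P]$ may admit several kernels (by kernel-perfectness), and combining this with an augmentation or swap step that replaces any $t \in T \cap N^+(K)$ by a non-conflicting alternative, if necessary also substituting a different kernel $K'$ of $D[P]$ whose out-neighborhood avoids the current $T$. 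Setting up this coordination so that it terminates with a valid $Q$ satisfying all four conditions is the main technical challenge, and this is where I would focus most of the work.
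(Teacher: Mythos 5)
Your first reduction is correct: if $N^-(P)\neq\emptyset$, delete a vertex $v\in N^-(P)$, apply the induction hypothesis to $D-v$, and observe that the resulting $Q'$ still works in $D$ (the vertex $v$ reaches $Q'$ via its out-neighbor in $P$, independence is inherited, and $v\notin Q'$ so $Q'\cap N^-_D(P)=\emptyset$).

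However, your treatment of the remaining case $N^-(P)=\emptyset$ is not merely unfinished---the ansatz $Q=K\cup T$ with $K$ a kernel of $D[P]$ and $T$ a quasi-kernel of $D[W]$ is wrong, and no amount of ``coordination'' can repair it, because the desired $Q$ need not contain any kernel of $D[P]$ at all. Concretely, take $P=\{p\}$ and let $D$ be the out-star with center $p$ pointing to every vertex of $W$ (so $N^-(P)=\emptyset$). Then $K=\{p\}$ is forced, $D[W]$ is arcless so $T=W$ is forced, and $K\cup T=V(D)$ is not independent. The correct choice is $Q=W$, which satisfies $P\subseteq N^-[Q]$ because $p$ has an out-neighbor in $Q$, not because $K\subseteq Q$. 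So the case analysis needs a different idea, not a refinement of the $K\cup T$ plan.

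The missing idea is to \emph{enlarge} $P$ rather than keep it fixed. When $N^-(P)=\emptyset$ and $W\neq\emptyset$, every $w\in W$ has no out-arc into $P$, so $w$ is a sink of $D[P\cup\{w\}]$; and adding a sink preserves kernel-perfectness (for any induced $H\ni w$, take a kernel of $H\setminus N^-_H[w]$ and adjoin $w$). Thus $P\cup\{w\}$ is again kernel-perfect, and you can recurse. Note, though, that this step does not shrink $|V(D)|$, so your induction measure must change in this branch (e.g.\ induct on $|V(D)\setminus P|$, or lexicographically on $(|V(D)|,\,|V(D)\setminus P|)$). The paper compresses all of this into a single observation: WLOG $P$ is a maximal kernel-perfect set inside $D[V(D)\setminus N^-(P)]$ (enlarging cannot hurt, as the new vertices avoid $N^-(P)$ so the two required conclusions only get stronger); by the sink-addition fact this maximality forces $N^-[P]=V(D)$, and then an arbitrary kernel of $D[P]$ already is the desired $Q$.
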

\begin{proof}
    Without loss of generality, assume $P$ is the maximal kernel-perfect set in $D[V(D)\-N^-(P)]$. Since adding a sink to a kernel-perfect set keeps it kernel-perfect, this means that $N^-[P]=V(D)$. Thus, an arbitrary kernel of $P$ works.
\end{proof}
Somewhat curiously, we need only the first property of $Q$ to prove {\kpiii} and {\kpii}, and only the second property to prove {\kpi}, not using the fact that they can be satisfied at the same time.
\begin{proof}[Proof of {\kpiii}]
    This is clear: Suppose $V(D)=V_1\cup\dots\cup V_k$ is the kernel-perfect partition. Apply Lemma~\ref{kplem} on the largest part to get a quasi-kernel $Q$ with $|N^-[Q]|\ge |V(D)|/k$.
\end{proof}
\begin{proof}[Proof of {\kpii}]
    We note that the proof of ``{\rniii} $\to$ {\rnii}" in Proposition~\ref{equiv} can also show that {\kpiii} implies {\kpii}. Say we want to show that {\ctii}($\frac1k$) holds on the digraph $D$. To use that reduction argument, we just need to be able to apply {\ctiii}($\frac1k$) on a digraph~$B$, which is a weighted blowup of some induced subdigraph of $D$ ($D[A]$). Thus, $\kp(B)=\kp(D[A])\le\kp(D)$, so if $D$ has a small kernel-perfect number, so does $B$. Hence {\kpiii} can be correctly invoked.
\end{proof}
Actually, the proof of ``{\rnii} $\to$ {\rni}($\frac\alpha{1+\alpha}$)" in Proposition~\ref{equiv} would also follow through with the kernel-perfect number condition, concluding a weaker bound than we need for {\kpi}. To get the precise bound, we use a standalone proof inspired by \cite[Theorem~2]{MR4477848}.
\begin{proof}[Proof of {\kpi}]
    Suppose $V(D)=V_1\cup\dots\cup V_k$ is a partition (empty sets allowed) such that each $D[V_i]$ is kernel-perfect. Without loss of generality, assume that $V_1$ is maximally kernel-perfect: in particular, every~${v\notin V_1}$ has an out-neighbor in $V_1$. Let~$K$ be a kernel of $D[V_1]$, and $K_0$ be a minimal subset of~$K$ such that $N^-(K_0)=N^-(K)$. Let $V'_1=N^-(K)\cup K_0$ and observe that $|K_0|\le|N^-(K)|$: for all~$v\in K_0$, because $K_0$ is minimal, there must be some $u=u(v)\in N^-(K)$ whose only out-neighbor in~$K_0$ is~$v$. For all $i\in\{2,\dots,k\}$, let $V'_i=V_i\-N^-(K)$. Then let $V'_0=K\-K_0$. Note that $\{V'_i\}_{i=0,\dots,k}$ is a partition of $V(D)$. 
    Let \eq{W=V(D)\-V'_1=V'_0\cup V'_2\cup\dots\cup V'_k.} If for some $i\in\{2,\dots,k\}$, $|N^-(V'_i)\cap V'_0|\ge|W|/k$, then since $V'_i$ is a kernel-perfect set in $D[W]$, by Lemma~\ref{kplem} there is a quasi-kernel $Q$ of $D[W]$ that is disjoint with $N^-_{D[W]}(V'_i)$. Observe that $Q\cup(K_0\-N^-(Q))$ is a quasi-kernel of $D$, and \eq{|Q\cup(K_0\-N^-(Q))|&\le |W\-N^-_{D[W]}(V'_i)|+|K_0|\\&\le |W|-|W|/k+|V'_1|/2\\&\le (k-1)n/k.} Otherwise, observe that $(N^-(W)\cap V'_0)\cup K_0$ is a quasi-kernel of $D$, and \eq{|(N^-(W)\cap V'_0)\cup K_0|&\le \sum_{i=2}^k|N^-(V'_i)\cap V'_0|+|K_0|\\&\le (k-1)|W|/k+|V'_1|/2\\&\le (k-1)n/k.} Either way, we find a quasi-kernel of $D$ with size at most $(k-1)n/k$.
\end{proof}
\bibliographystyle{plain}
\bibliography{refs} 

\begin{thebibliography}{10}

\bibitem{MR4569676}
Jiangdong Ai, Stefanie Gerke, Gregory Gutin, Anders Yeo, and Yacong Zhou.
\newblock Results on the small quasi-kernel conjecture.
\newblock {\em Discrete Math.}, 346(7):Paper No. 113435, 9, 2023.

\bibitem{bang2008digraphs}
J{\o}rgen Bang-Jensen and Gregory~Z Gutin.
\newblock {\em Digraphs: theory, algorithms and applications}.
\newblock Springer Science \& Business Media, 2008.

\bibitem{MR0414412}
V\'{a}clav Chv\'{a}tal and L\'{a}szl\'{o} Lov\'{a}sz.
\newblock Every directed graph has a semi-kernel.
\newblock In {\em Hypergraph {S}eminar ({P}roc. {F}irst {W}orking {S}em., {O}hio {S}tate {U}niv., {C}olumbus, {O}hio, 1972; dedicated to {A}rnold {R}oss)}, volume Vol. 411 of {\em Lecture Notes in Math.}, page p. 175. Springer, Berlin-New York, 1974.

\bibitem{erdHos2023small}
P{\'e}ter~L Erd{\H{o}}s, Ervin Gy{\H{o}}ri, Tam{\'a}s~R{\'o}bert Mezei, Nika Salia, and Mykhaylo Tyomkyn.
\newblock On the small quasi-kernel conjecture.
\newblock {\em arXiv preprint arXiv:2307.04112}, 2023.

\bibitem{erdHos2010two}
Peter~L Erd{\H{o}}s and L{\'a}szl{\'o}~A Sz{\'e}kely.
\newblock Two conjectures on quasi-kernels, open problems no. 4. in fete of combinatorics and computer science.
\newblock {\em Bolyai Society Mathematical Studies}, 2010.

\bibitem{heard2008disjoint}
Scott Heard and Jing Huang.
\newblock Disjoint quasi-kernels in digraphs.
\newblock {\em Journal of Graph Theory}, 58(3):251--260, 2008.

\bibitem{MR4477848}
Alexandr~V. Kostochka, Ruth Luo, and Songling Shan.
\newblock Towards the small quasi-kernel conjecture.
\newblock {\em Electron. J. Combin.}, 29(3):Paper No. 3.49, 6, 2022.

\bibitem{richardson1946weakly}
M~Richardson.
\newblock On weakly ordered systems.
\newblock {\em Bulletin of the American Mathematical Society}, 52(2):113--116, 1946.

\bibitem{spiro2024generalized}
Sam Spiro.
\newblock Generalized quasikernels in digraphs.
\newblock {\em arXiv preprint arXiv:2404.07305}, 2024.

\bibitem{Hulst2021KernelsAS}
Allan van Hulst.
\newblock Kernels and small quasi-kernels in digraphs.
\newblock {\em arXiv preprint arXiv:2110.00789}, 2021.

\end{thebibliography}

\end{document}